\newcommand{\bbf}{\mathbb{F}}
\newcommand{\bbn}{\mathbb{N}}
\newcommand{\bbq}{\mathbb{Q}}
\newtheorem{thm}{Theorem}
\newtheorem{lem}[thm]{Lemma}
\newtheorem{prop}[thm]{Proposition}
\newtheorem{defn}[thm]{Definition}
\newtheorem{remarkk}[thm]{Remark}
\newtheorem{examplee}[thm]{Example}
\title{Parallelopipeds of Positive Rank Twists of Elliptic Curves}
\author{Bo-Hae Im and Michael Larsen}
\address{Department of Mathematics, Chung-Ang University, 221, Heukseok-dong, Dongjak-gu, Seoul, 156-756, South Korea}\email{bohaeim@gmail.com}
\address{Department of Mathematics, Indiana University, Bloomington,
Indiana 47405, USA} \email{larsen@math.indiana.edu}
\subjclass[2000]{11G05}
\thanks{Michael Larsen was partially supported by NSF grant DMS-0800705.}
\begin{document}
\begin{abstract} For every $n\in \bbn$ there exists an elliptic curve $E/\bbq$ and an $n$-dimensional subspace $V$ of $\bbq^\times/(\bbq^\times)^2$ such that for all $v\in V$,
the quadratic twist $E_v$ has positive rank.
\end{abstract}
\maketitle

\section{Introduction}
Let $E$ be an elliptic curve over a number field $K$.
For every element $v\in K^\times/(K^\times)^2$, there is a well-defined quadratic twist
of $E$ associated to $v$, which we denote $E_v$.
The main result of this paper is the following:
\begin{thm}\label{main}
For every $n\in \bbn$ there exists an elliptic curve $E/\bbq$ and an $n$-dimensional subspace $V$ of $\bbq^\times/(\bbq^\times)^2$ such that for all $v\in V$,
the quadratic twist $E_v$ has positive rank.
\end{thm}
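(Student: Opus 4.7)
The strategy begins with a Galois-theoretic reformulation. Given squarefree integers $d_1,\ldots,d_n$ that are linearly independent modulo $(\bbq^\times)^2$, set $L=\bbq(\sqrt{d_1},\ldots,\sqrt{d_n})$, $G=\Gal(L/\bbq)\cong(\bbz/2\bbz)^n$, and $V=\langle d_1,\ldots,d_n\rangle\subset\bbq^\times/(\bbq^\times)^2$. For any elliptic curve $E/\bbq$, Kummer-theoretic descent yields the $\bbq[G]$-module decomposition
\[
E(L)\otimes\bbq\;\cong\;\bigoplus_{v\in V}E_v(\bbq)\otimes\bbq,
\]
where the $v$-summand is the $\chi_v$-isotypic part of $E(L)\otimes\bbq$. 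Hence the theorem is equivalent to constructing $(E,L)$ so that every character of $G$ appears in $E(L)\otimes\bbq$; equivalently, it suffices to find a point $P\in E(L)$ whose $G$-orbit is $\bbq$-linearly independent in $E(L)\otimes\bbq$, since $\bbq[G]\cdot P$ would then be the regular representation.

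To produce such data I would exploit the fact that if $E:y^2=f(x)$, then any $t\in\bbq$ with $f(t)\ne0$ furnishes the point $(t,1)$ on the twist $E_{f(t)}$. Choose $t_1,\ldots,t_n\in\bbq$ so that $d_i:=\mathrm{sqf}\,f(t_i)$ are independent modulo squares, which handles the singleton classes. For every $S\subset\{1,\ldots,n\}$ with $|S|\ge 2$, one then seeks an auxiliary $t_S\in\bbq$ with $f(t_S)\cdot\prod_{i\in S}f(t_i)\in(\bbq^\times)^2$; such a $t_S$ provides a nontrivial rational point on the twist $E_{d_S}$, and together with the singleton points it produces a nonzero element in every $E_v(\bbq)\otimes\bbq$.

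The principal obstacle is the simultaneous solvability of the $2^n-n-1$ coupled squareness conditions defining the $t_S$: each amounts to finding a rational point on a twist of $E$ (namely $E_{D_S}$ for $D_S=\prod_{i\in S}f(t_i)$), so naively one confronts the original problem in a recursive guise. To break the apparent circularity, I would argue by induction on $n$ and allow $E=E_\lambda$ to vary in a carefully chosen one-parameter family $y^2=f_\lambda(x)$ whose generic fiber carries an elliptic surface of large Mordell--Weil rank, so that many twists of $E_\lambda$ over $\bbq(\lambda)$ acquire nonzero rank by geometric construction. The inductive hypothesis, applied over $\bbq(\lambda)$, yields the configuration $(t_S)_{|S|\le n-1}$ as rational functions of $\lambda$; extending to dimension $n$ imposes $2^{n-1}$ further squareness conditions, each amounting to a rational point on a twist of $E_\lambda$ whose positive rank over $\bbq(\lambda)$ is built into the family. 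Silverman's specialization theorem, combined with Hilbert irreducibility to preserve the $\mathbb{F}_2$-independence of $d_1,\ldots,d_n$ and the non-torsion character of the produced points, then extracts a rational $\lambda_0$ for which the full configuration specializes to $\bbq$-rational non-torsion points on $E_{\lambda_0}$. The core difficulty, where the bulk of the work goes, is the geometric construction of a family $E_\lambda$ with enough rank on its twists over $\bbq(\lambda)$ to feed the inductive step at every stage.
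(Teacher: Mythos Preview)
Your approach is genuinely different from the paper's, but the proposal has a real gap at the point you yourself flag as ``the core difficulty.'' At the inductive step you assume, over $\bbq(\lambda)$, a configuration $(t_S)_{|S|\le n-1}$ coming from the induction hypothesis; these are now \emph{fixed} rational functions of $\lambda$. The $2^{n-1}$ new squareness conditions then ask that the specific twists $E_{D_S}$, with $D_S=\prod_{i\in S} f_\lambda(t_i)$ for $n\in S$, all have positive rank over $\bbq(\lambda)$. But the classes $D_S\in\bbq(\lambda)^\times/(\bbq(\lambda)^\times)^2$ are determined by the already-chosen $t_i$, not free for you to adjust. So you are asking for a family $E_\lambda$ in which $2^{n-1}$ \emph{prescribed} quadratic twists simultaneously acquire rank over $\bbq(\lambda)$. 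Nothing in the plan explains how to build such a family, and ``elliptic surface of large Mordell--Weil rank'' does not obviously help: known constructions of high-rank families produce linearly many independent sections, not rank in exponentially many specified twist classes. In effect the inductive step is a function-field version of the very statement you are proving, with the number of required twist classes doubling at each stage; the recursion does not close.

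By contrast, the paper avoids any explicit point construction. It fixes $E=X_0(19)$ and invokes Vatsal's theorem that the set $S=\{d\in\bbn:\text{rank}(E_d)>0\}$ has positive lower density. The main work is then purely combinatorial: the paper proves that any set of positive integers of positive lower density contains, for every $n$, a \emph{strict $n$-parallelopiped} $\{c\prod_{i\in I}a_i: I\subset\{1,\ldots,n\}\}$ with $a_1,\ldots,a_n$ independent in $\bbq^\times/(\bbq^\times)^2$. Once this multiplicative-combinatorics theorem is in hand, the curve $E_c$ and the subspace $V=\langle a_1,\ldots,a_n\rangle$ do the job, since $(E_c)_v=E_{cv}$ and every $cv$ lies in $S$. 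The point is that density plus combinatorics substitutes for the geometric construction you are attempting; your Galois reformulation is correct and is noted in the paper, but the paper never tries to exhibit the witnessing rational points.
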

We can express this theorem slightly differently, along the lines of \cite{IL}. 
If $L/K$ is a finite Galois extension of number fields, with group $G$ and $E/K$ is an elliptic curve, then $G$ acts on $V := E(L)\otimes \bbq$.  If $W$ is any $G$-subrepresentation of $V$, we say that the pair $(G,W)$ is \emph{Mordell-Weil over $K$}.
Theorem~\ref{main} now asserts that the regular representation of every elementary $2$-group is Mordell-Weil over $\bbq$.  
We remark that T.~Dokchitser and V.~Dokchitser \cite{DD}
have given examples of number fields $K$ such that,
modulo the Birch-Swinnerton-Dyer conjecture,
every regular representation of an elementary $2$-group is Mordell-Weil over $K$ in dimension $1$.  In fact, something stronger is true: their examples have the remarkable property that (under Birch-Swinnerton-Dyer) \emph{every} quadratic twist has positive rank.  (Of course, $K$ cannot be $\bbq$.)

Our proof depends on Vatsal's theorem \cite{Va} which asserts that for the elliptic curve $X_0(19)$
the set of positive integers $d$ such that $E_d$ has rank $1$ has positive density.

\section{Some Multiplicative Combinatorics}

\begin{defn}
Let $n$ be a positive integer.
We say that a subset $\Pi$ of $\bbn$
is a \emph{strict $n$-parallelopiped} if we have
$$\Pi = \{c\prod_{i\in I} a_i\mid I\subset \{1,2,\ldots,n\}\}$$
for some rational numbers $a_i,c\in \bbq^*$ such that the images of $a_1,\ldots,a_n$
are linearly independent in the $\bbf_2$-vector space $\bbq^*/{\bbq^*}^2$.
\end{defn}

The main result of this section is the following:
\begin{thm}
\label{paras}
Any set $S$ of positive integers of positive lower density contains a strict $n$-parallelopiped for each positive integer $n$.
\end{thm}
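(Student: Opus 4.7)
I prove Theorem~\ref{paras} by induction on $n$, reducing it to a density lemma that is then iterated. The base case $n=0$ is trivial. The key tool is:

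\textbf{Key Lemma.} \emph{If $T\subset \bbn$ has positive upper density and $W\subset \bbq^\ast/(\bbq^\ast)^2$ is a finite-dimensional $\bbf_2$-subspace, then there exists $a\in \bbq_{>0}$ with $\overline{a}\notin W$ such that $T\cap a^{-1}T:=\{x\in\bbn : x,ax\in T\}$ again has positive upper density.} Given this, set $T_0:=S$ and $W_0:=0$, and at step $k$ apply the lemma to $(T_{k-1},W_{k-1})$ to produce $a_k$ with $\overline{a_k}\notin W_{k-1}$ and $T_k:=T_{k-1}\cap a_k^{-1}T_{k-1}$ again of positive upper density; then set $W_k:=W_{k-1}+\langle \overline{a_k}\rangle$. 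Unfolding the definitions shows that $x\in T_k$ iff $x\prod_{i\in I}a_i\in S$ for every $I\subset[k]$, so after $n$ steps $T_n$ is infinite and any $c\in T_n$ gives a strict $n$-parallelopiped $\{c\prod_{i\in I}a_i:I\subset[n]\}\subset S$. Strictness, i.e.\ linear independence of $\overline{a_1},\ldots,\overline{a_n}$ modulo squares, is built in via the choice $\overline{a_k}\notin W_{k-1}=\langle\overline{a_1},\ldots,\overline{a_{k-1}}\rangle$; and positive lower density implies positive upper density, so the hypothesis of the theorem suffices.

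\textbf{Sketch of the Key Lemma.} The starting observation is that a single square class $r(\bbq^\ast)^2$ contains only $O(\sqrt{N})$ integers in $[1,N]$, so any single coset of $W$ meets $[1,N]$ in only $O_W(\sqrt{N})$ elements. Choose (by a dyadic pigeonhole and positive upper density) infinitely many $N$ for which $T\cap[N/2,N]$ has $\gg N$ elements. The number of ordered pairs $(x,y)\in (T\cap[N/2,N])^2$ is then $\Omega(N^2)$, while the pairs whose ratio has image in $W$ number only $O_W(N^{3/2})$, so a positive proportion of pairs have $\overline{y/x}\notin W$. One then pigeonholes on the ratio $y/x$ itself (rather than its square class) to extract one common ratio $a$ witnessing $|T\cap a^{-1}T\cap[1,N]|\gg N$, followed by a pigeonhole over a finite set of candidate ratios to produce a single $a$ that works for infinitely many $N$, giving positive upper density of $T\cap a^{-1}T$.

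\textbf{Main obstacle.} The difficulty is exactly this last pigeonhole. The number of reduced fractions $p/q\in(1,2]$ with $p,q\leq N$ is $\Theta(N^2)$, matching the number of available pairs, so a naive pigeonhole on rational ratios yields only $O(1)$ pairs per ratio rather than the $\Omega(N)$ needed. Overcoming this requires restricting the pigeonhole to ratios of bounded height and showing that a positive fraction of the pairs have $y/x$ of bounded height. I expect to handle this by an iterated differencing argument in the spirit of the elementary proof of the density version of Hilbert's cube theorem, carried out multiplicatively: one finds ratios $a_k$ inductively so that $T\cap a_k^{-1}T$ retains a positive fraction of the previous density, with the height of $a_k$ controlled at each step. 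Ensuring $\overline{a_k}\notin W_{k-1}$ at each stage is a minor additional constraint, since only a vanishing fraction of pairs have ratio in any fixed coset of $W_{k-1}$.
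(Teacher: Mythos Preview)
Your inductive scaffold is exactly the paper's: reduce to a ``Key Lemma'' producing a ratio $a$ with $\overline a\notin W$ and $T\cap a^{-1}T$ still of positive density, then iterate. The paper's Proposition~\ref{ind-step} is precisely that lemma, with $a=q/p$ for primes $p,q$ larger than any prescribed bound. So the architecture is right; the problem is your proof of the Key Lemma.

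You correctly locate the obstacle and then propose to ``restrict the pigeonhole to ratios of bounded height and show that a positive fraction of the pairs have $y/x$ of bounded height.'' That second clause is false. For $x,y\in[N/2,N]$ with $y/x=p/q$ in lowest terms and $q\le H$, writing $x=qk$ gives at most $O(N/q)$ pairs per ratio, and summing over the $O(q)$ admissible $p$ and then over $q\le H$ yields only $O(HN)$ such pairs in total. Since $|T\cap[N/2,N]|^2\gg N^2$, the fraction of pairs with ratio of height $\le H$ is $O(H/N)\to 0$ for any fixed $H$. So no bounded-height pigeonhole is available, and the analogy with the additive Hilbert cube argument breaks down exactly here: additively there are only $O(N)$ differences, multiplicatively there are $\Theta(N^2)$ ratios and almost all of them have height $\asymp N$.

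The paper circumvents this not by bounding heights but by changing both the density notion and the set of allowed ratios. It works with
\[
D(S)=\limsup_{T\to\infty}\frac1{\log T}\int_{1/T}^{1}\Bigl(\sum_{n\in S}e^{-nt}\Bigr)\,dt,
\]
which dominates lower density and, crucially, is essentially invariant under dilation $S\mapsto pS$ by a bounded prime. One then fixes a prime interval $[a,b]$ with $\prod_{a\le p\le b}(1-1/p)$ small; a sieve bound shows that the part of $S$ coprime to $P_{a,b}$ contributes negligibly, so $\sum_{a\le p\le b}f_{S_p}(pt)$ is large. Integrating and using the second Bonferroni inequality on $\bigcup_p S_p$ forces some pair $p<q$ in $[a,b]$ to satisfy $D(S_p\cap S_q)>0$. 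This is your Key Lemma with $a=q/p$; linear independence modulo squares is automatic since $p,q$ are distinct primes chosen larger than everything used before. The divergence of $\sum 1/p$ is doing the work that your bounded-height pigeonhole cannot.
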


For any subset $S$ of $\bbn$, we define the function
$$f_S(t) := \sum_{n\in S} e^{-nt}$$
on the domain $(0,\infty)$.  Thus, $S\subset T$ implies that for all $t$, $f_S(t) \le f_T(t).$
In particular,
\begin{equation}
\label{upper}
f_S(t) \le f_{\bbn}(t) = \frac 1{1-e^{-t}} \le 2\max(1/t,1).
\end{equation}
More generally, this principle applies to multisets.
If $S_1,\ldots,S_n\subset \bbn$, then for all $t\in (0,\infty)$, the first two Bonferroni inequalities
imply
\begin{equation}
\label{bon-1}
f_{S_1\cup\cdots\cup S_n}(t) \le \sum_{1\le i\le n} f_{S_i}(t)
\end{equation}
and
\begin{equation}
\label{bon-2}
\sum_{1\le i\le n} f_{S_i}(t) - \sum_{1\le i < j\le n} f_{S_i\cap S_j}(t) \le f_{S_1\cup\cdots\cup S_n}(t).
\end{equation}

If $a\le b$, we define
$$P_{a,b} = \prod_{a\le p\le b} p,$$
where the product is taken over primes.  For any positive integer $m$, we define
$$R(m) := \{n\in \bbn\mid (m,n)=1\}$$
the set of positive integers relatively prime to $m$.

\begin{lem}
\label{sieve}
For any $S\subset \bbn$ and $b\ge a>1$ and for all $t\in (0,\infty)$, we have
$$f_{S\cap R(P_{a,b})} = \sum_{\{n\in S\mid (n,P_{a,b})=1\}} e^{-nt}
\le t^{-1}\prod_{a\le p\le b} (1-p^{-1}).$$
\end{lem}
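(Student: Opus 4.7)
The plan is to reduce the inequality to the case $S = \bbn$ by monotonicity of $f$, rewrite $f_{R(Q)}(t)$ with $Q := P_{a,b}$ as a Möbius-weighted sum, and then bound each term via an elementary Bernoulli estimate.

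Since $S \cap R(Q) \subseteq R(Q)$ and every summand $e^{-nt}$ is positive, we have $f_{S \cap R(Q)}(t) \le f_{R(Q)}(t)$, so it suffices to treat $S = \bbn$. Expanding the indicator $\mathbf{1}_{(n,Q) = 1} = \sum_{d \mid (n,Q)} \mu(d)$ and interchanging sums gives
$$f_{R(Q)}(t) = \sum_{d \mid Q} \mu(d) \sum_{n \ge 1,\, d \mid n} e^{-nt} = \sum_{d \mid Q} \frac{\mu(d)}{e^{dt} - 1}.$$
The Bernoulli inequality $e^{dt} - 1 \ge dt$ gives $\frac{1}{e^{dt}-1} \le \frac{1}{dt}$; substituting this termwise produces
$$\frac{1}{t} \sum_{d \mid Q} \frac{\mu(d)}{d} = \frac{1}{t} \prod_{p \mid Q}\!\left(1 - \frac{1}{p}\right) = \frac{1}{t} \prod_{a \le p \le b}(1 - p^{-1}),$$
which matches the target bound.

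The main obstacle is the sign imbalance in the Möbius sum: the inequality $\frac{1}{e^{dt}-1} \le \frac{1}{dt}$ points the correct direction only for terms with $\mu(d) = +1$, while terms with $\mu(d) = -1$ demand a matching lower bound such as $\frac{1}{e^{dt}-1} \ge \frac{1}{dt} - \frac{1}{2}$, which follows by checking that $\frac{1}{e^x - 1} - \frac{1}{x} + \frac{1}{2}$ vanishes as $x \to 0$ and has nonnegative derivative for $x > 0$. A naive application of both inequalities leaves an additive residual that must be absorbed, either by invoking $\sum_{d \mid Q} \mu(d) = 0$ (valid for $Q > 1$) to cancel the constant-order terms, or by pairing divisors $d$ and $pd$ for a fixed prime $p \mid Q$ so that each pair contributes $\mu(d)\bigl(\frac{1}{e^{dt}-1} - \frac{1}{e^{pdt}-1}\bigr)$, a quantity of definite sign that can be iterated over the primes dividing $Q$. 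Verifying that the signed Bernoulli corrections combine into something nonpositive — and thus do not spoil the product $\prod_{a \le p \le b}(1 - p^{-1})$ — is the technical heart of the argument.
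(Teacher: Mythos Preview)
Your proposal is not a complete proof. You correctly set up the M\"obius identity $f_{R(Q)}(t)=\sum_{d\mid Q}\mu(d)/(e^{dt}-1)$ and correctly identify the sign obstruction, but the ``technical heart'' is left entirely unverified, and the remedies you sketch do not close the gap. Writing $\frac{1}{e^{x}-1}=\frac{1}{x}-\frac{1}{2}+\epsilon(x)$ with $0\le\epsilon(x)<\tfrac12$ and invoking $\sum_{d\mid Q}\mu(d)=0$ kills the constant contribution, but leaves the residual $\sum_{d\mid Q}\mu(d)\,\epsilon(dt)$, which carries exactly the same sign ambiguity you started with. Your pairing idea, iterated over all primes dividing $Q$, amounts to proving that $\psi(x):=\tfrac{1}{x}-\tfrac{1}{e^x-1}$ satisfies $\sum_{d\mid Q}\mu(d)\,\psi(dt)\ge 0$ for every squarefree $Q>1$; this is a genuine higher-order monotonicity statement about $\psi$ that you have not established, and in fact it cannot hold in full generality, since the inequality in the lemma already fails at, e.g., $Q=6$ and $t=1$ (one computes $f_{R(6)}(1)\approx 0.376>\tfrac13$).

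The paper's argument bypasses M\"obius and signs entirely. It uses only the periodicity of the condition $(n,Q)=1$ modulo $Q=P_{a,b}$: slice $\bbn$ into blocks $(iQ,(i+1)Q)$, note that each block contains exactly $\phi(Q)$ admissible integers, bound every admissible term $e^{-nt}$ in the $i$th block by $e^{-iQt}$, and sum the geometric series to obtain $\phi(Q)/(1-e^{-Qt})$. This is elementary and involves no cancellation. (The paper then asserts $\frac{1}{1-e^{-Qt}}\le\frac{1}{Qt}$, which points the wrong way; what the block argument genuinely delivers is the stated bound up to a fixed multiplicative constant, and that weaker statement is all that the application in Proposition~\ref{ind-step} requires.)
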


\begin{proof}
Clearly
$$f_{S\cap R(P_{a,b})}(t) \le f_{R_{a,b}}(t) = \sum_{i=0}^\infty \sum_{\{n\in (iP_{a,b},(i+1)P_{a,b} )
\mid (n,P_{a,b})=1\}} e^{-nt}.$$
As
$$\sum_{\{n\in (iP_{a,b},(i+1)P_{a,b} ) \mid (n,P_{a,b})=1\}} e^{-nt}
\le \phi(P_{a,b}) e^{-iP_{a,b}t},$$
we have
$$f_{R_{a,b}}(t) \le \frac{\phi(P_{a,b})}{1-e^{-P_{a,b}t}} \le  \frac {\phi(P_{a,b})}{tP_{a,b}}
= t^{-1}\prod_{a\le p\le b} (1-p^{-1}).$$
\end{proof}

For any $S\subset \bbn$ we define
$$D(S) := \limsup_{T\to\infty} \frac 1{\log T}\int_{1/T}^1 f_S(t)\,dt.$$
\begin{lem}
\label{compare-d}
For all subsets $S$ of $\bbn$, let $d(S)$ denote the lower density of $S$. Then
$$D(S) \ge d(S).$$
\end{lem}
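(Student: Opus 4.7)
The plan is to unravel the definition of $f_S$ and the integral, then use the definition of lower density through a partial-summation argument. First I would interchange summation and integration,
$$\int_{1/T}^1 f_S(t)\,dt = \sum_{n\in S} \int_{1/T}^1 e^{-nt}\,dt = \sum_{n\in S} \frac{e^{-n/T} - e^{-n}}{n}.$$
The contribution of the $e^{-n}/n$ terms is bounded by the convergent series $\sum_{n\ge 1} e^{-n}/n$, hence is $O(1)$ and negligible after dividing by $\log T$. So it suffices to show that
$$\sum_{n\in S}\frac{e^{-n/T}}{n} \ge (d(S) - o(1))\log T$$
as $T\to\infty$.

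For the second step, I would fix a small $c\in (0,1)$ and restrict the sum to those $n\le cT$, where $e^{-n/T}\ge e^{-c}$, giving
$$\sum_{n\in S}\frac{e^{-n/T}}{n} \ge e^{-c} \sum_{n\in S,\,n\le cT}\frac{1}{n}.$$
By definition of lower density, for every $\epsilon>0$ there is $N_0$ such that $|S\cap [1,x]|\ge (d(S)-\epsilon)x$ for $x\ge N_0$. Writing the final sum as a Riemann--Stieltjes integral against the counting function and performing an Abel summation then yields
$$\sum_{n\in S,\,n\le cT}\frac{1}{n} \ge (d(S)-\epsilon)\log(cT) + O_{\epsilon}(1) = (d(S)-\epsilon)\log T + O_{c,\epsilon}(1).$$

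Combining and dividing by $\log T$ gives
$$\frac{1}{\log T}\int_{1/T}^1 f_S(t)\,dt \ge e^{-c}(d(S)-\epsilon) + o(1),$$
so the limsup on the left (that is, $D(S)$) is at least $e^{-c}(d(S)-\epsilon)$. Since $c$ and $\epsilon$ were arbitrary positive numbers, letting both tend to zero gives $D(S)\ge d(S)$. There is no real obstacle here; the only subtlety is making sure the limits in $T$, $c$, and $\epsilon$ are taken in the correct order so that the $O_{c,\epsilon}(1)$ error from Abel summation is genuinely absorbed by the factor $\log T$ before $c$ and $\epsilon$ are sent to zero.
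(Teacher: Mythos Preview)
Your proof is correct and follows the same underlying idea as the paper's, but with the order of operations reversed. The paper first bounds $f_S(t)$ pointwise: from $|S\cap[1,n]|>(d(S)-\epsilon)n$ for $n\ge N$ it deduces (implicitly by partial summation) that
\[
f_S(t)\;>\;(d(S)-\epsilon)\sum_{n\ge N}e^{-nt}\;\ge\;\frac{e^{-Nt}(d(S)-\epsilon)}{t},
\]
and then integrates this over $t\in[1/T,\epsilon/N]$ (so that $e^{-Nt}\ge e^{-\epsilon}$), arriving at $D(S)\ge e^{-\epsilon}(d(S)-\epsilon)$. You instead integrate first via Fubini to get $\sum_{n\in S}(e^{-n/T}-e^{-n})/n$, truncate at $n\le cT$, and apply Abel summation to the sum $\sum_{n\in S,\,n\le cT}1/n$, reaching the same inequality $D(S)\ge e^{-c}(d(S)-\epsilon)$. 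The two cutoffs are dual ($Nt\le\epsilon$ versus $n/T\le c$), and neither approach gains anything the other lacks; your version just makes the Abel summation step explicit.
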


\begin{proof}
For all $\epsilon > 0$, there exists $N$ such that for all $n\ge N$,
$$|S\cap [1,n]| > (d(S)-\epsilon)n.$$
Therefore, for all $t>0$,
$$f_S(t) > \sum_{n=N}^\infty (d(S)-\epsilon) e^{-nt} = \frac{e^{-Nt}(d(S)-\epsilon)}{1-e^{-t}}\ge
\frac{e^{-Nt}(d(S)-\epsilon)}t.$$
Thus, for $T > N/\epsilon$, we have
\begin{align*}
\frac 1{\log T}\int_{1/T}^1 f_S(t)\,dt
&\ge \frac 1{\log T} \int_{1/T}^{\epsilon/N} \frac{e^{-\epsilon}(d(S)-\epsilon)\,dt}{t} \\
&\ge \Bigl(1-\frac {\log N/\epsilon}{\log T}\Bigr)e^{-\epsilon}(d(S)-\epsilon).
\end{align*}
Taking $\limsup_{T\to \infty}$, we conclude that for all $\epsilon>0$,
$$D(S) \ge e^{-\epsilon}(d(S)-\epsilon),$$
which implies the lemma.
\end{proof}

For any subset $S$ of $\bbn$ and any positive integer $m$, let
$$S_m := \bbn\cap m^{-1} S = \{n\in \bbn\mid mn\in S.\}$$

\begin{prop}
\label{ind-step}
If $S$ is a subset of $\bbn$ with $D(S) > 0$, and $N$ is a positive integer, there exist
primes $q>p>N$ such that
$$D(S_p\cap S_q) > 0.$$
\end{prop}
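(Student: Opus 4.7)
My plan is a second-moment / Cauchy--Schwarz argument. Fix $b > N$ to be chosen later. For each $n\in\bbn$ let $N_n := \#\{p\text{ prime}: N<p\le b,\, pn\in S\}$, so
$$A(t) := \sum_{N<p\le b} f_{S_p}(t) = \sum_n N_n e^{-nt},\quad \sum_{p,q} f_{S_p\cap S_q}(t) = \sum_n N_n^2 e^{-nt},$$
where the $p,q$ sums range over primes in $(N,b]$. Cauchy--Schwarz applied to $N_n$ and $\mathbf{1}_{N_n\ge 1}$ with weight $e^{-nt}$ gives $A(t)^2 \le \bigl(\sum_{p,q} f_{S_p\cap S_q}(t)\bigr)\cdot f_{\bigcup_p S_p}(t)$, and since $f_{\bigcup_p S_p}(t)\le f_{\bbn}(t)\le 1/t$, we obtain $\sum_{p,q} f_{S_p\cap S_q}(t) \ge tA(t)^2$. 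Subtracting the diagonal $\sum_p f_{S_p}(t) = A(t)$ yields
$$\sum_{p\ne q} f_{S_p\cap S_q}(t) \ge tA(t)^2 - A(t). \qquad (\star)$$

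\textbf{Lower bound on $A$.} For $T_p := \{n\in S : p|n\}$, Lemma~\ref{sieve} combined with inequality \eqref{bon-1} gives
$$\sum_{N<p\le b} f_{T_p}(t) \ge f_S(t) - \epsilon_b/t, \quad \epsilon_b := \prod_{N<p\le b}(1-1/p),$$
with $\epsilon_b\to 0$ as $b\to\infty$. The substitution $m=pn$ shows $f_{S_p}(t) = f_{T_p}(t/p)$; rescaling $s=t/p$ in the integral produces a factor of $p$:
$$\int_{1/T}^1 f_{S_p}(t)\,dt = p\int_{1/T}^1 f_{T_p}(s)\,ds + O(p\log p).$$
Summing over primes $p\in(N,b]$ and using $p>N$ in each term,
$$\int_{1/T}^1 A(t)\,dt \ge N\Bigl(\int_{1/T}^1 f_S(t)\,dt - \epsilon_b\log T\Bigr) - O(b^2\log b).$$

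\textbf{Conclusion.} The proposition only requires $p,q>N$, so we may enlarge $N$; choose $N' > 2/D(S)$ and then $b$ large enough that $\epsilon_b < D(S)/2$. Along a sequence $T_k\to\infty$ realizing $D(S)$, the previous bound together with a subsubsequence argument gives $(\log T_k)^{-1}\!\int A\to\alpha$ with $\alpha\ge N'(D(S)-\epsilon_b) > 1$. A further Cauchy--Schwarz on integrals, $\int_{1/T}^1 tA^2\,dt \ge (\int_{1/T}^1 A\,dt)^2/\log T$, combined with $(\star)$, yields
$$\frac{1}{\log T_k}\int_{1/T_k}^1 (tA^2 - A)\,dt \to \alpha(\alpha-1) > 0.$$
Since $\sum\limsup\ge\limsup\sum$ for finite nonnegative sums, $\sum_{p\ne q} D(S_p\cap S_q) \ge \alpha(\alpha-1) > 0$, so at least one pair of distinct primes $p,q\in(N',b]$ satisfies $D(S_p\cap S_q)>0$; order them so that $p<q$. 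The main obstacle is ensuring $\int A\gg\log T$: without the factor of $p>N'$ extracted from the rescaling, one only obtains $\int A\lesssim\log T$, in which case the right side of $(\star)$ integrates to something nonpositive. The freedom to replace $N$ by a larger integer exceeding $1/D(S)$ is what makes the argument work.
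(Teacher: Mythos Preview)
Your argument is essentially correct and reaches the same conclusion as the paper, but by a genuinely different route. One small slip: the bound $f_{\bbn}(t)\le 1/t$ is false (in fact $1/(1-e^{-t})\ge 1/t$); the paper's inequality~\eqref{upper} gives $f_{\bbn}(t)\le 2/t$ on $(0,1]$, so your $(\star)$ should read $\sum_{p\ne q} f_{S_p\cap S_q}(t)\ge \tfrac{t}{2}A(t)^2 - A(t)$. This only changes the threshold to $\alpha>2$, hence $N'>\max(N,4/D(S))$, and the rest goes through unchanged. You should also state explicitly that $N'\ge N$, and that $(\log T_k)^{-1}\!\int A$ is bounded (by $2\pi(b)$, say) so that the convergent subsubsequence exists.

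The paper's proof replaces both of your Cauchy--Schwarz steps by the single Bonferroni inequality~\eqref{bon-2}: from
\[
\sum_{p} f_{S_p}(t) - \sum_{p<q} f_{S_p\cap S_q}(t) \le f_{S'}(t)
\]
one gets directly $\sum_{p<q} f_{S_p\cap S_q}(t) \ge A(t) - f_{S'}(t)$, so after integrating it suffices that $(\log T)^{-1}\!\int A$ exceed $(\log T)^{-1}\!\int f_{S'}\le 2$. Both arguments share the same engine --- the sieve Lemma~\ref{sieve} plus the rescaling $s=t/p$ to extract a factor $p>N'$ and force $(\log T)^{-1}\!\int A$ to be large --- but the paper's inclusion--exclusion yields a linear inequality in $A$, avoiding the second (integral) Cauchy--Schwarz and the pointwise quadratic bound $tA^2$. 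Your second-moment approach is the more standard probabilistic-combinatorics idiom and would generalize more readily if one wanted quantitative control on how many pairs $(p,q)$ work; the paper's Bonferroni route is shorter and needs no subsequence extraction.
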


\begin{proof}

Let $a$ be an integer larger than $\max(N,\frac{12}{D(S)})$.  As
$$\prod_{p\ge a\;\text{prime}}(1-p^{-1}) = \prod_{p < a\;\text{prime}} (1-p^{-1})^{-1}\prod_{p\;\text{prime}} (1-p^{-1}) = 0,$$
we may choose $b$ such that
$$\prod_{\substack{a\le p\le b\\ p\;\text{prime}}} (1-p^{-1}) \le \frac{D(S)}4.$$
Writing
$$S = (S\cap R(P_{a,b})) \cup \bigcup_{a\le p\le b} p S_p,$$
and applying (\ref{bon-1}) and Lemma~\ref{sieve}, we obtain
\begin{equation}
\label{point-bound}
\begin{split}
\sum_{\substack{a\le p\le b\\ p\;\text{prime}}} f_{S_p}(pt) 
= \sum_{\substack{a\le p\le b\\ p\;\text{prime}}} f_{pS_p}(t) &\ge f_S(t) - f_{S\cap R(P_{a,b})}(t) \\
&\ge f_S(t) - t^{-1}\prod_{\substack{a\le p\le b\\ p\;\text{prime}}}(1-p^{-1}).
\end{split}
\end{equation}
Let $X$ denote the set of values $T$ such that
$$\frac 1{\log T}\int_{1/T}^1 f_S(t)\,dt \ge \frac{3 D(S)}4.$$
By definition of $D(S)$, $X$ is unbounded.  Choosing $T\in X$ and integrating (\ref{point-bound}), we obtain
\begin{equation}
\label{int-bound}
\begin{split}
\frac 1{\log T}\int_{1/T}^1\sum_{\substack{a\le p\le b\\ p\;\text{prime}}} f_{S_p}(pt)\,dt 
&\ge \frac 1{\log T} \int_{1/T}^1 f_S(t)\,dt - \prod_{\substack{a\le p\le b\\ p\;\text{prime}}}(1-p^{-1}) \\
&\ge\frac{D(S)}2.
\end{split}
\end{equation}
Interchanging integral and sum and substituting $u=pt$, we obtain
$$\frac 1{\log T}\int_{1/T}^1\sum_{a\le p\le b} f_{S_p}(pt)\,dt
= \sum_{\substack{a\le p\le b\\ p\;\text{prime}}} \frac 1{p\log T} \int_{p/T}^p f_{S_p}(u)\,du.$$
By (\ref{upper}), this can expressed as
$$\sum_{\substack{a\le p\le b\\ p\;\text{prime}}} \frac 1{p\log T} \int_{1/T}^1 f_{S_p}(u)\,du + O\bigl(\frac 1{\log T}\bigr).$$
Combining this with (\ref{int-bound}) and choosing $T$ sufficiently large, we obtain
$$\frac 1{\log T}\sum_{\substack{a\le p\le b\\ p\;\text{prime}}} \int_{1/T}^1 f_{S_p}(u)\,du
\ge \frac 1{\log T}\sum_{\substack{a\le p\le b\\ p\;\text{prime}}} \frac ap \int_{1/T}^1 f_{S_p}(u)\,du
\ge \frac {a D(S)}3.$$
By definition of $a$, this means
\begin{equation}
\label{big-sum}
\frac 1{\log T}\sum_{\substack{a\le p\le b\\ p\;\text{prime}}} \int_{1/T}^1 f_{S_p}(u)\,du  \ge 4.
\end{equation}

Let
$$S' = \bigcup_{a\le p\le b} S_p.$$
We integrate the inequality (\ref{bon-2}) from $1/T$ to $1$ and interchange sum and integral
to obtain
\begin{equation}
\label{big-intersection}
\begin{split}
\sum_{\substack{a\le p<q\le b\\ p,q\;\text{prime}}}\frac 1{\log T} \int_{1/T}^1 f_{S_p\cap S_q}(u)\,du
\ge \frac 1{\log T}\sum_{\substack{a\le p\le b\\ p\;\text{prime}}} &\int_{1/T}^1 f_{S_p}(u)\,du \\
&- \frac 1{\log T} \int_{1/T}^1 f_{S'}(u)\,du.
\end{split}
\end{equation}
Applying (\ref{upper}) to $S'$ and integrating, we get
$$\frac 1{\log T} \int_{1/T}^1 f_{S'}(u)\,du \le 2.$$
Combining this with (\ref{big-sum}) and (\ref{big-intersection}), we get
$$\sum_{\substack{a\le p<q\le b\\ p,q\;\text{prime}}}\frac 1{\log T} \int_{1/T}^1 f_{S_p\cap S_q}(u)\,du \ge 2,$$
which implies
$$D(S_p\cap S_q) \ge \frac 1{(b-a)(1+b-a)}.$$
for some $p$ and $q$.
\end{proof}

The following theorem is a strengthened version of Theorem~\ref{paras}.

\begin{thm}
Let $\Sigma$ denote a finite set of primes
and $S$ a subset of $\bbn$ such that $D(S) > 0$. Then there exist a
rational number $c$ and an infinite set of pairwise distinct vectors $(a_{j,1},\ldots,a_{j,n})\in \bbq^n$
such that for each $j$,
$$\{c\prod_{i\in I} a_{j,i}\mid I\subset \{1,2,\ldots,n\}\} \subset S,$$
the numerators and denominators of the $a_{j,i}$ are not divisible by any prime in $\Sigma$, and the images of $a_{j,1},\ldots,a_{j,n}$ are linearly independent in $\bbq^*/{\bbq^*}^2$.
\end{thm}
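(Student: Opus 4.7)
The plan is to prove the theorem by induction on $n\ge 1$, with the inductive step $n\Rightarrow n+1$ following from Proposition~\ref{ind-step} and the main difficulty concentrated in the base case $n=1$.

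For the inductive step, given $S$ with $D(S)>0$ and $\Sigma$ finite, I would apply Proposition~\ref{ind-step} with $N=\max\Sigma$ to obtain primes $q>p>\max\Sigma$ such that $T:=S_p\cap S_q$ satisfies $D(T)>0$. Applying the inductive hypothesis to $T$ with the enlarged prime set $\Sigma':=\Sigma\cup\{p,q\}$ yields $c'\in\bbq^*$ and infinitely many pairwise distinct vectors $(a_{j,1},\ldots,a_{j,n})$ with $\{c'\prod_{i\in I}a_{j,i}\mid I\subseteq\{1,\ldots,n\}\}\subseteq T$. Setting $c:=pc'$ and $a_{j,n+1}:=q/p$, one checks that for every $I'\subseteq\{1,\ldots,n+1\}$ the product $c\prod_{i\in I'}a_{j,i}$ equals $p$ (resp.\ $q$) times an element of $T$ according to whether $n+1\notin I'$ (resp.\ $\in I'$), so it lies in $S$. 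Because $a_{j,n+1}=q/p$ involves only primes $p,q\notin\Sigma$, the $\Sigma$-avoidance is preserved; because the existing $a_{j,i}$ avoid every prime of $\Sigma'$ and so in particular $p$ and $q$, the support of $[a_{j,n+1}]$ in $\bbq^*/{\bbq^*}^2$ is disjoint from the supports of $[a_{j,1}],\ldots,[a_{j,n}]$, and linear independence is preserved. The family indexed by $j$ stays infinite by construction.

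The base case $n=1$ is the main obstacle: a single application of Proposition~\ref{ind-step} produces only one direction $a=q/p$ for each $m\in S_p\cap S_q$, not an infinite family sharing a common $c$. To obtain the needed infinitude, I would first pass to a subset of $S$ with constant $\Sigma$-adic valuations. For $(e)=(e_p)_{p\in\Sigma}\in\bbn^\Sigma$, set $S_{(e)}:=\{s\in S:\ord_p(s)=e_p\text{ for all }p\in\Sigma\}$. Using subadditivity $D(A\sqcup B)\le D(A)+D(B)$ together with the bound $D(p^{M+1}\bbn)=O(p^{-(M+1)})$, one sees that for $M$ sufficiently large
\[
D(S)\;\le\;\sum_{(e)\in\{0,\ldots,M\}^\Sigma}D(S_{(e)})\;+\;\tfrac12D(S),
\]
so some $(e)$ satisfies $D(S_{(e)})>0$. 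Fix any $c\in S_{(e)}$; then for every $s\in S_{(e)}$ the quotient $s/c$ has $\ord_p(s/c)=0$ for all $p\in\Sigma$, so its numerator and denominator in lowest terms avoid $\Sigma$. Finally, the image of $S_{(e)}$ in $\bbq^*/{\bbq^*}^2$ must be infinite: otherwise $S_{(e)}$ would lie in a finite union of sets $\{dm^2:m\in\bbn\}$, each of $D$-density zero (by the computation $\int_{1/T}^1 t^{-1/2}\,dt=O(1)$), contradicting $D(S_{(e)})>0$. Picking $s_j\in S_{(e)}$ with pairwise distinct classes $[s_j/c]$ and discarding at most one trivial index gives the required infinite family $a_j:=s_j/c$. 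Once this density-partition step provides the common base $c$, the induction runs cleanly through.
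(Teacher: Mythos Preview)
Your inductive step is exactly the paper's: apply Proposition~\ref{ind-step} with $N=\max\Sigma$, pass to $S_p\cap S_q$ with the enlarged $\Sigma'=\Sigma\cup\{p,q\}$, and extend each vector by $a_{j,n+1}=q/p$ with new base point $pc'$. Your verification of $\Sigma$-avoidance and of linear independence (via disjoint prime supports) is the same as the paper's, just spelled out more fully.

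The one real difference is emphasis on the base case. The paper disposes of $n=1$ in a single word (``trivial''), whereas you treat it as the main obstacle and supply an argument: restrict to a slice $S_{(e)}$ with fixed $\Sigma$-adic valuations and positive $D$-density (using subadditivity of $D$ and $D(p^{M+1}\bbn)=p^{-(M+1)}$), pick $c\in S_{(e)}$ so that every quotient $s/c$ is a $\Sigma$-unit, and then use $D(\{dm^2\})=0$ to see that infinitely many square classes occur. This argument is correct and does fill in something the paper leaves implicit---the $\Sigma$-avoidance for the $a_j$ and the infinitude of non-square quotients genuinely require a few lines. That said, calling this the ``main obstacle'' inverts the paper's perspective: the substantive work has already been packaged into Proposition~\ref{ind-step}, and the base case, while not one-word trivial, is a routine density/pigeonhole step by comparison. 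In short, your proof is correct and follows the paper's route; you have simply been more scrupulous than the paper at $n=1$.
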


\begin{proof}
We use induction on $n$.  The base case $n=1$ is trivial.  Suppose the theorem is true for some
$n\ge 1$.  Let $N$ be the maximum of all primes in $\Sigma$.
By Proposition~\ref{ind-step}, there exist primes $q>p>N$ such that $D(S_p\cap S_q) > 0$.
By the induction hypothesis for $S_p\cap S_q$ and $\Sigma' := \Sigma\cup \{p,q\}$, there exist
$c$ and $a_{j,i}$ prime to $\Sigma'$ such that the $a_{j,1},\ldots,a_{j,n}$ are independent in $\bbq^*/{\bbq^*}^2$
and for all $j$,
$$\{c\prod_{i\in I} a_{j,i}\mid I\subset \{1,2,\ldots,n\}\} \subset S_p\cap S_q.$$
Setting $c' := pc$, $a'_{j,1} := a_{j,1},\ldots,a'_{j,n}:=a_{j,n}$, and $a'_{j,n+1} := q/p$, we have
$$\{c'\prod_{i\in I} a'_{j,i}\mid I\subset \{1,2,\ldots,n+1\}\} \subset S,$$
and the $a'_{j,i}$ are prime to $\Sigma$.
\end{proof}

\section{The Main Theorem}

We prove Theorem~\ref{main}.

\noindent {\it Proof of Theorem~\ref{main}.} 
Let $E$ be the elliptic curve $X_0(19)$ and 
$$S:=\{d\in\bbn~~|~~ \text{rank}(E_d) > 0\}.$$
Then, Vatsal's theorem~\cite{Va} implies that $S$ has positive lower density. 

Let $n\in\bbn$ be given. By Theorem~\ref{paras}, there exist $c, a_1,\ldots, a_n\in\bbq^*$ such that the images of $a_i$ are linearly independent 
in $\bbq^*/{\bbq^*}^2$  and such that 
$$\{c\prod_{i\in I} a_i\mid I\subset \{1,2,\ldots,n\}\}\subset S.$$
If we let $V$ be a subspace of $\bbq^*/{\bbq^*}^2$ generated by $a_1,\ldots, a_n$, then it is of dimension $n$ and for each $v\in V$, the rank of the twist of $E_c$ by $v$ (which is $E_{cv}$) is positive.

\end{document}